\renewcommand{\geq}{\geqslant}
\renewcommand{\leq}{\leqslant}
\renewcommand{\H}{\mathcal{H}}
\newcommand{\R}{\mathbb{R}}
\newcommand{\C}{\mathbb{C}}
\renewcommand{\P}{\mathbb{P}}
\newtheorem{theorem}{Theorem}[section]
\newtheorem{lemma}[theorem]{Lemma}
\newtheorem{proposition}[theorem]{Proposition}
\newtheorem{corollary}[theorem]{Corollary}
\newtheorem*{main-theorem}{Main Theorem}
\newtheorem*{remark*}{Remark}
\numberwithin{equation}{section}
\title[Solitary water waves]{No solitary waves exist on 2D deep water}
\author[Hur]{Vera~Mikyoung~Hur}
	\address{Department of Mathematics, University of Illinois at Urbana-Champaign, Urbana, IL 61801}
	\email{verahur@\allowbreak math.\allowbreak uiuc.\allowbreak edu}
\begin{document}

\maketitle

\begin{abstract}The solitary wave problem at the free surface of a two-dimensional, infinitely-deep and irrotational flow of water, under the influence of gravity, is formulated as a nonlinear pseudodifferential equation. A Pohozaev identity is used to show that it admits no solutions which asymptotically vanish faster than linearly.\end{abstract}

\medskip

\section{Introduction}\label{S:intro}

The purpose of this note is to relate steady waves at the free surface of a two-dimensional, infinitely-deep and irrotational flow of an incompressible inviscid fluid, acted upon by gravity, to the nonlinear pseudodifferential equation
\begin{equation}\label{E:deep}
\H w'=\mu(w+w\H w'+\H(ww')). \end{equation}
It will then be used to demonstrate the non-existence of solitary waves, for which the fluid surface asymptotically approaches a constant level over a nearly uniform flow. Here $w(x)$, $x\in\R$, measures the wave profile and $\mu=g/c^2$, where $g$ describes gravitational acceleration and $c$ denotes the speed of wave propagation; $\mu>0$ is physically realistic. The Hilbert transform is given as
\begin{equation}\label{D:H}
\H v(x)=\frac{1}{\pi} PV \int^\infty_{-\infty} \frac{v(t)}{x-t}~dt \qquad (x \in \R), \end{equation}
where $PV$ stands for the Cauchy principal value; alternatively $\widehat{\H v}(\xi)=-i\text{sgn}(\xi)\hat{v}(\xi)$ defines the operator in the Fourier space. Throughout $'=d/dx$.

\medskip

Solitary waves at the surface of water hold a central position in the theory of wave motion and they have historically stimulated a considerable part of its development, from Russell's famous horseback observations to the elucidation of the Korteweg-de Vries solitons. In the case where the flow depth is finite, the mathematical theory of solitary water waves dates back to constructions in \cite{FrHy} and \cite{Beale} of small-amplitude waves and it includes the global bifurcation result in \cite{AmTo81} and extensions in \cite{Hur08a} and \cite{GrWa08} to waves with vorticity. The symmetry and monotonicity properties were discussed in \cite{CrSt} and \cite{Hur08b} while the regularity properties were in \cite{Lewy} and \cite{Hur10}. Moreover non-uniqueness and linear instability were addressed in \cite{Plo} and in \cite{Lin-IMRN} for waves near the ``extremal" form. 

The present result, in stark contrast, states that no solitary water waves exist in the infinite-depth case, which, in case gravity acts downwards, asymptotically tend to the quiescent state faster than linearly.

\subsection*{Notation} 
The weighted H\"older space, written $C^{k+\alpha}_\rho(\R)$, for $k\geq 0$ an integer, $\alpha \in (0,1)$ and for $\rho\in \R$, is characterized via the norm
\begin{equation}\label{D:wHolder} 
\|v\|_{C^{k+\alpha}_\rho(\R)} =\sum_{j=0}^k\sup_{x\in\R}\left\langle x\right\rangle^{\rho}|v^{(j)}(x)|
+ \sup_{x\in\R}\sup_{|x-t|\leq 1}\left\langle x\right\rangle^{\rho} \frac{|v^{(k)}(x)-v^{(k)}(t)|}{|x-t|^\alpha},
\end{equation}
where $\left\langle x\right\rangle=(1+x^2)^{1/2}$. If $v \in C^{k+\alpha}_\rho(\R)$ then its derivatives of order up to $k$ vanish at least like $|x|^{-\rho}$ as $|x| \to \infty$.

\begin{theorem}\label{T:non-existence}
In the case of $\mu>0$, if $w \in H^1(\R)\cap C^{1+\alpha}_{1+\epsilon}(\R)$  satisfies \eqref{E:deep} for some $\alpha \in (0,1)$ and for some $\epsilon \in (0,1)$ then $w\equiv 0$. 

In the case of $\mu\leq0$, if $w \in H^1(\R)\cap C^{1+\alpha}(\R)$ satisfies \eqref{E:deep} for some $\alpha \in(0,1)$ and if in addition $\inf_{x\in\R}(1+\H w'(x))>0$ then $w\equiv 0$. \end{theorem}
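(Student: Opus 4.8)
The plan is to read \eqref{E:deep} as a variational equation and then to extract a Pohozaev (dilation) identity from it. Define
\[
E[w]=\frac12\int_\R w\,\H w'\,dx-\frac{\mu}{2}\int_\R w^2\,dx-\frac{\mu}{2}\int_\R w^2\,\H w'\,dx .
\]
Using that $\H$ is skew-adjoint, commutes with $d/dx$ and obeys $\H^2=-I$, the $L^2$-gradients of $\tfrac12\int_\R w\,\H w'\,dx$, $\tfrac12\int_\R w^2\,dx$ and $\tfrac12\int_\R w^2\,\H w'\,dx$ are $\H w'$, $w$ and $w\,\H w'+\H(ww')$ respectively, so that
\[
E'[w]=\H w'-\mu\bigl(w+w\,\H w'+\H(ww')\bigr),
\]
and $E'[w]=0$ is precisely \eqref{E:deep}. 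Observe that $\int_\R w\,\H w'\,dx=\frac1{2\pi}\int_\R|\xi|\,|\hat w(\xi)|^2\,d\xi=\|w\|_{\dot H^{1/2}}^2\ge0$, with equality only for $w\equiv0$; this nonnegative quantity is what will ultimately force $w$ to vanish.

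For $\mu>0$ I would run the Pohozaev argument by testing \eqref{E:deep} against $xw'$, equivalently by differentiating $E$ along the dilations $w_\lambda(x)=w(x/\lambda)$. Since $\H$ commutes with dilations, $\int_\R w\,\H w'\,dx$ and $\int_\R w^2\,\H w'\,dx$ are \emph{exactly} scale invariant while $\int_\R w^2\,dx$ scales like $\lambda$; hence $E[w_\lambda]$ is affine in $\lambda$ and
\[
\frac{d}{d\lambda}E[w_\lambda]\Big|_{\lambda=1}=-\frac{\mu}{2}\int_\R w^2\,dx .
\]
On the other hand $\partial_\lambda w_\lambda|_{\lambda=1}=-xw'$ and $E'[w]=0$, so the chain rule forces the same derivative to vanish. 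Comparing the two expressions yields the Pohozaev identity $\mu\int_\R w^2\,dx=0$, and therefore $w\equiv0$.

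The nonlocality of $\H$ makes the justification the heart of the matter. The direction $xw'=-\partial_\lambda w_\lambda|_{\lambda=1}$ must be admissible and every weighted integral must converge with no contribution from infinity, and this is exactly what membership in $C^{1+\alpha}_{1+\epsilon}(\R)$ provides: from $w,w'=O(\langle x\rangle^{-1-\epsilon})$ and $\int_\R w'\,dx=0$ one gets the improved bound $\H w'=O(\langle x\rangle^{-2})$, so that the cubic integrals converge comfortably. The one place where $\epsilon>0$ is genuinely needed rather than $\rho=1$ is the term $\int_\R xw'\,w\,dx=-\tfrac12\int_\R w^2\,dx$, whose integrand decays like $\langle x\rangle^{-1-2\epsilon}$ and whose boundary contribution $\tfrac12 xw^2$ vanishes. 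I would make all this rigorous either by differentiating under the integral sign or, more safely, by testing against a cut-off multiplier $\chi(x/R)\,xw'$ and estimating the commutator of $\H$ with multiplication by $x$ together with the remainders on $|x|\sim R$ using $w\in H^1(\R)$; controlling these nonlocal error terms is the main obstacle.

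When $\mu\le0$ the multiplier $xw'$ is no longer admissible, since now only boundedness and $w\in H^1(\R)$ are available and $w$ need not decay pointwise. Instead I would test \eqref{E:deep} against $w$ itself. Using skew-adjointness together with $\int_\R w\,\H(ww')\,dx=\tfrac12\int_\R w^2\,\H w'\,dx$, this gives
\[
\|w\|_{\dot H^{1/2}}^2=\mu\int_\R w^2\Bigl(1+\tfrac32\,\H w'\Bigr)\,dx .
\]
The left-hand side is nonnegative and vanishes only at $w\equiv0$, while for $\mu\le0$ the hypothesis $\inf_{x\in\R}(1+\H w'(x))>0$ is exactly what I would use to control the sign of the right-hand side: writing $1+\tfrac32\H w'=\tfrac32(1+\H w')-\tfrac12$ and combining with the equation to bound the cubic term $\int_\R w^2\,\H w'\,dx$, the aim is to show the right-hand side cannot be positive, so that both sides vanish and $w\equiv0$. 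The degenerate case $\mu=0$ is immediate, as then $\H w'=0$ forces $w$ to be constant, hence $w\equiv0$ by $w\in H^1(\R)$. I expect this sign analysis, rather than the identity itself, to be the delicate step in the $\mu\le0$ regime.
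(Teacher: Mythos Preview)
Your $\mu>0$ argument is the same Pohozaev identity the paper uses, but you are missing the step that makes it rigorous. From $w'\in C^{\alpha}_{1+\epsilon}$ and $\int w'=0$ you do \emph{not} get $\H w'=O(\langle x\rangle^{-2})$; one vanishing moment only improves the decay by one power beyond what is available in the integrand, so the correct bound is $\H w'=O(\langle x\rangle^{-1-\epsilon})$. With that, every term in the tested equation is indeed absolutely integrable, but the identity $\int x\,w'\,\H w'\,dx=0$ (equivalently, the chain rule $\frac{d}{d\lambda}E[w_\lambda]|_{\lambda=1}=\langle E'[w],-xw'\rangle$) relies on the skew-adjointness step $\int (xw')\H w'=-\int \H(xw')\,w'$, which requires $xw'\in L^2$. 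That fails for $\epsilon\le\tfrac12$, and a cutoff does not obviously rescue it: the Calder\'on commutator $[\H,\chi_R\,x]$ has $L^2$ operator norm bounded uniformly in $R$, not tending to zero. The paper closes this gap by first bootstrapping the assumed decay $C^{1+\alpha}_{1+\epsilon}$ up to $C^{1+\alpha}_{2}$ (Lemma~\ref{L:decay+}), after which $xw'\in L^2$ and your identity is clean.

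Your $\mu\le0$ argument takes a genuinely different route from the paper, but it has a real gap. Testing against $w$ gives $\|w\|_{\dot H^{1/2}}^2=\mu\int w^2(1+\tfrac32\H w')$, and for $\mu<0$ you need $\int w^2(1+\tfrac32\H w')\ge0$. The hypothesis $1+\H w'>0$ only yields $1+\tfrac32\H w'>-\tfrac12$, which is not enough; there is no reason the integral should be nonnegative, and feeding the equation back in produces further cubic and quartic terms (for instance $\int w^2\H(ww')=\tfrac12\|w^2\|_{\dot H^{1/2}}^2$) with no evident sign structure. The paper does \emph{not} argue by sign: it keeps the same Pohozaev identity as in the $\mu>0$ case and instead proves the needed decay $w'=O(\langle x\rangle^{-2})$ from scratch. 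This is done by linearizing \eqref{E:deep}, applying the Plotnikov transformation to convert $\mathcal{L}w'=0$ into an eigenvalue equation $(\H\,d/dx+V)v=\mu v$ with $V\to0$ at infinity, and then invoking decay of eigenfunctions for the relativistic Schr\"odinger operator $|D|+V$ at a negative eigenvalue $\mu$. That is where the hypothesis $1+\H w'>0$ is actually used (it makes the Plotnikov map invertible and the rewriting \eqref{E:bernoulli} valid), not as a sign condition in an energy identity.
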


Establishing an integral identity for the interface displacement, Sun in \cite{Sun97} argued the non-existence of solitary waves between two fluids flows of infinite extent, provided that the profile vanishes at infinity faster than linearly and that it is either purely elevated or purely depressed. Craig in \cite{Cra02} made an alternative proof based upon the maximum principle for the water wave problem in two and higher dimensions, without assuming decay of the fluid surface but requiring the positivity or negativity. Therefore a solitary wave on deep water cannot be everywhere positive (or negative). Do solitary water waves then exist, which oscillate about the mean fluid level? Theorem~\ref{T:non-existence} answers it negatively. If the effects of surface tension are factored into, on the other hand, solitary water waves were shown to arise in \cite{IK}, for instance, which necessarily change sign. 

\medskip

The solitary wave problem on deep water is introduced in Section~\ref{S:formulation} as a free boundary problem in potential theory. A ``regular" solution of the system, for which the velocity vanishes nowhere at the fluid surface (see Section \ref{SS:free-boundary}), then corresponds to a solution of \eqref{E:deep}, which serves as the basis of the present account, and vice versa so long as the fluid surface lacks self-intersections and cusps. Exploiting a conformal mapping of the fluid domain, the proof resembles that for Babenko's equation in the Stokes (periodic) wave setting, which already led to much progress in \cite{BDT00a} and \cite{BDT00b} among others. 

Observe that \eqref{E:deep} enjoys the scaling symmetry under 
\[w(x)\mapsto\lambda^{-1}w(\lambda x)\quad\text{and}\quad\mu\mapsto\lambda\mu\] 
for any $\lambda>0$ and that the vector field $xd/dx$ generating the symmetry commutes with the Hilbert transform (see \eqref{E:Hcomm}). They offers in Lemma~\ref{L:non-existence} an extremely simple non-ex\-is\-tence proof for \eqref{E:deep} whenever $xw'\in L^2(\R)$, which is reminiscent of Pohozaev identities techniques. For  differential equations one may further employ a truncation argument to promote $L^2$-based spaces with weight to locally $L^\infty$-based spaces; see \cite{dBS}, for instance, for the Kadomtsev-Petviashivilli equation. But the Hilbert transform, being pseudodifferential, does not commute with functions as a rule. We instead make an effort to understand the asymptotic behavior of the solution to refine the conclusion.

In the case of $\mu<0$, i.e. gravity acts oppositely to what is physically realistic, the linearized operator of \eqref{E:deep} becomes $\H d/dx-\mu$ plus a function after the Plotnikov transformation (see \eqref{E:plotnikov}). Accordingly $\mu$ may be regarded as an eigenvalue of $\H d/dx=(-d^2/d x^2)^{1/2}$ with potential. Spectral properties in \cite{CaMaSi}, for instance, of a relativistic Schr\"odinger operator then reveals in Lemma~\ref{L:decay-} that the derivative of a solution to \eqref{E:deep} decays quadratically. In the Stokes wave setting, incidentally, a non-existence proof based upon duality is found in \cite{Tol02}.

In case gravity acts downwards, on the other hand, $\mu>0$ is contained in the essential spectrum of $\H d/dx$ and hence the proof of Lemma~\ref{L:decay-} is not applicable. To attain decay nevertheless, we shall impose a solvability condition. Specifically, if a solution of \eqref{E:deep} is assumed to decay faster than $x^{-1}$ as $|x|\to \infty$ then it does like $x^{-2}$ upon a bootstrapping argument in \cite{CrSt} or \cite{Sun97}, for instance. The author has not yet succeeded in removing the extra condition although it is desirable. For a broad class of interfacial fluids problems in the infinite-depth case, including the Benjamin-Ono equation and the water wave problem with surface tension (see \cite{IK}, for instance), to compare, solitary waves vanish quadratically, suggesting that the decay rate of $x^{-1-}$ is not outrageous. 

\medskip

Concerning the non-stationary water wave problem in like setting, the associated linear operator $\partial^2/\partial t^2+\H\partial/\partial x$ remains invariant under the vector field $\frac12t\partial/\partial t+x\partial/\partial x$, where $t\in\R$ denotes the temporal variable, analogously to that $\H d/dx$ is invariant under $xd/dx$ in the steady wave problem \eqref{E:deep}. As a matter of fact dispersive estimates plus a profound understanding of the nonlinearity shed light in \cite{Wu3} to the almost-global existence for small data. To interpret, solitary waves of small amplitude are unlikely to arise at the surface of a two-dimensional infinitely-deep flow of water, if the profile is sufficiently smooth and if in addition it vanishes sufficiently fast at infinity. Theorem \ref{T:non-existence} bears it out. 

The development in Section \ref{S:formulation} of \eqref{E:deep} relies upon conformal mapping techniques. It seems not to work in higher dimensions, leaving open the tantalizing possibility of steady water waves in three dimensions, which oscillate yet become flat at infinity.  

\section{Formulation}\label{S:formulation}
The solitary wave problem on deep water is formulated as the nonlinear pseudodifferential equation \eqref{E:deep}.

\subsection{The free boundary problem}\label{SS:free-boundary}
The steady water wave problem in the simplest form concerns a two-dimensional, infinitely-deep and irrotational flow of an incompressible inviscid fluid and wave motions at the surface layer, under gravity.  By steady we mean that the flow as well as the surface wave move at a constant speed from right to left without changing their configuration. The effects of surface tension are neglected. The solitary wave problem then seeks for solutions, for which the fluid surface asymptotically tends to a constant level and the flow in the far field is nearly uniform.

In the dimensionless coordinates moving at the speed of wave propagation, let the parametric curve
\[\Gamma=\{ (u(x), v(x)): x \in \R \}\]
represent the fluid surface. Assume that
\begin{subequations}\label{E:stream}
\begin{align}
& \text{$x \mapsto (u(x),v(x))$ is continuously differentiable}; \label{E:gamma-injective}\\
& \text{$u'(x)>0$ and $u'(x)^2+v'(x)^2<\infty$ for every $x \in \R$}; \label{E:gamma-regular}\\
& \text{$(u(x)-x,v(x)) \to 0$ as $|x| \to \infty$.}\label{E:gamma-infty}
\end{align}
Let $\Omega$ denote the open region in the $(X,Y)$-plane below $\Gamma$, occupied by the fluid. {\em The solitary wave problem on deep water} is to find a curve $\Gamma$ satisfying \eqref{E:gamma-injective}-\eqref{E:gamma-infty}, a parameter $\mu \in \R$ and a function $\psi$ defined over $\overline{\Omega}$ such that 
\begin{align}
& \text{$\psi \in C^1(\overline{\Omega})\cap C^2(\Omega)$ and 
$\psi$ is harmonic in $\Omega$;} \label{E:laplace} \\
& \text{$\psi=0$ on $\Gamma$;} \label{E:kinematics-surface}\\
& \text{$\psi(X,Y) -Y \to 0$ as $|(X,Y)| \to \infty$;} \label{E:psi-infty}\\
& \text{$\lim_{(X,Y)\to(u(x),v(x))} |\nabla \psi(X,Y)|^2
+2\mu v(x) = 1$ for every $x \in \R$.} \label{E:bernoulli-weak}
\end{align}\end{subequations}
Details are discussed in \cite{Benjamin}, for instance. Here we merely hit the main points. 

\medskip

The kinematic boundary condition \eqref{E:kinematics-surface} states that the fluid surface itself makes a streamline while the dynamic boundary condition \eqref{E:bernoulli-weak} manifests Bernoulli's law of constant pressure at the surface. They determine the free boundary $\Gamma$, which by \eqref{E:gamma-injective} and \eqref{E:gamma-regular} is the graph of a $C^1$ function that is nowhere vertical. The (non-di\-men\-sion\-alized) stream function $\psi$ is to describe the fluid motion. Specifically $(-\psi_Y, \psi_X)$ denotes the steady velocity field. Boundary conditions \eqref{E:gamma-infty} and \eqref{E:psi-infty} mean, respectively, that the fluid surface becomes asymptotically horizontal and that the flow in the far field is nearly uniform. 

\medskip

A solution triple $\Gamma$, $\mu$ and $\psi$ of \eqref{E:stream} is said {\em regular} if $|\nabla \psi|> 0$ on $\Gamma$. A theorem of Lewy in \cite{Lewy} results in that the fluid surface of a regular solution to \eqref{E:stream} is real analytic. In the finite-depth case, on the other hand, $\nabla \psi=0$ at the crest of a wave of the maximum height, namely an ``extremal" wave; see \cite{AmTo81}, for instance.

\subsection{Complex Hardy spaces in the half plane}\label{SS:preliminaries}
Throughout \[\mathbb{P}=\{x+iy\in\C:y <0\}\] denotes the open lower half plane in the complex field. 

The following definitions and discussion are taken from \cite[Chapter I and Chapter II]{Gar} and \cite[Chapter 11]{Duren} among others.  

\medskip

The complex Hardy space in the lower half plane, written $\mathbf{H}^p(\mathbb{P})$, for $p \in(0,\infty]$, consists of holomorphic functions $f:\mathbb{P} \to \C$ such that 
\[ \|f\|_{\mathbf{H}^p(\mathbb{P})}=\sup_{y \in (-\infty,0)}  \| f(\cdot+iy)\|_{L^p(\R)} <\infty. \]
If $f \in \mathbf{H}^p(\P)$, $p\in (0,\infty)$, then its non-tangential limit 
\begin{equation}\label{E:f*} 
f^*(x):=\lim_{y \to 0-} f(x+iy) \end{equation} 
exists in $L^p(\R)$ and for almost all $x \in\R$. In the case of $f \in \mathbf{H}^\infty(\P)$ correspondingly $f^*$ exists almost everywhere on $\R$; see \cite[Theorem 11.1]{Duren} or \cite[Chapter II, Theorem 3.1]{Gar}, for instance. If $f \in \mathbf{H}^p(\P)$, $p \in (0,\infty]$, then either $f\equiv 0$ or ${\displaystyle \int^\infty_{-\infty} \frac{|\log|f^*(x)||}{1+x^2}~dx <\infty}$. If in addition $f^*=0$ on a set of positive measure then $f\equiv 0$.  

If $f \in \mathbf{H}^p(\mathbb{P})$, $p\in [1,\infty]$, then \cite[Theorem 11.2]{Duren} or \cite[Chapter II, Corollary 3.2]{Gar}, for instance, state that
\[ f(x+iy)=\frac{1}{\pi}\int^\infty_{-\infty} \frac{y}{(x-t)^2+y^2}f^*(t)~dt.\]
That is to say, $f$ agrees with the Poisson integral of its boundary function. Conversely, if $v \in L^p(\R)$ and if ${\displaystyle f(x+iy)=\frac{1}{\pi}\int^\infty_{-\infty} \frac{y}{(x-t)^2+y^2}v(t)~dt}$ is holomorphic in $\mathbb{P}$ then $f \in\mathbf{H}^p(\mathbb{P})$. Moreover $f^*=v$ almost everywhere on $\R$. 

\medskip

If $u$ is harmonic in $\P$ and if $\sup_{y \in (-\infty,0)}\|u(\cdot+iy)\|_{L^p(\R)} <\infty$, $p \in (1,\infty]$, then the classical Fatou lemma (see \cite[Chapter~I, Theorem~5.3]{Gar}, for instance) implies that its non-tangential limit $u^*$ exists almost everywhere on $\R$ and $u$ is the Poisson integral of $u^*$. If in addition $u^*$ is uniformly continuous on $\R$ then $u \to u^*$ uniformly as $y \to 0-$; see \cite[Chapter~I, Theorem~3.1]{Gar}, for instance.

\medskip

In the case of $p=2$, in particular, the Paley-Wiener theorem (see \cite[The\-o\-rem 11.9]{Duren}, for instance) implies that if $v \in L^2(\R)$ then there exists a unique function $\mathcal{R}v \in \mathbf{H}^2(\mathbb{P})$ such that 
\begin{equation}\label{E:R} (\mathcal{R}v)^*=\H v+iv;\end{equation} 
recall that $\H$ is the Hilbert transform (see \eqref{D:H}). Conversely, if $f \in \mathbf{H}^2(\mathbb{P})$ then $f^*=\H v+iv$ for some $v \in L^2(\R)$. 

Elementary properties of the Hilbert transform include that $\H: L^p(\R) \to L^p(\R)$, $p\in (1,\infty)$, is bounded and that the adjoint of $\H$ is $-\H$. Moreover the Privalov theorem states that $\H:C^\alpha(\R) \to C^\alpha(\R)$, $\alpha\in (0,1)$, is bounded.

\medskip

Lastly, if $v \in L^\infty(\R)$, $v \neq 0$ almost everywhere on $\R$ and if $1/v \in L^\infty(\R)$ then there exists $f \in \mathbf{H}^\infty(\P)$, called an {\em outer function} of $v$, such that $|f^*|=v$ almost everywhere on $\R$ if and only if ${\displaystyle \int^\infty_{-\infty}\frac{|\log|v(x)||}{1+x^2}~dx <\infty}$. Moreover $1/f \in \mathbf{H}^\infty(\P)$. We refer the reader to \cite[Chapter~II, Theorem~4.4]{Gar}, for instance.

\subsection{Reduction to a single equation}\label{SS:equation}
This subsection concerns the equivalence between the system \eqref{E:stream} and the single nonlinear pseudodifferential equation \eqref{E:deep}. 

In what follows the two-dimensional space is identified with the complex plane whenever it is convenient to do so. 

\begin{lemma}\label{P:stream->bernoulli}
Suppose that a curve $\Gamma$ and a function $\psi$ defined in the plane below $\Gamma$ form a regular solution of \eqref{E:stream} for some $\mu \in \R$. Let  $-\phi$ be a harmonic conjugate of $\psi$. 
Then $\phi+i\psi$ is a conformal bijection from $\overline{\Omega}$ onto $\overline{\mathbb{P}}$ which maps $\infty$ to $\infty$.

Let $w=\emph{Im}\,(\phi+i\psi)^{-1}(\cdot+i0)$. If $w \in H^1(\R)$ then
\begin{equation}\label{E:bernoulli}
(1-2\mu w)((1+\H w')^2+(w')^2)=1\end{equation} 
and 
\begin{equation}\label{E:injective}
\inf_{x\in\R}(1+\H w'(x))>0.\end{equation}
\end{lemma}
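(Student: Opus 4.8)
The plan is to identify $\phi+i\psi$ with the complex velocity potential of the flow, pass to its inverse, which parametrizes the fluid domain by the lower half plane, and then read \eqref{E:bernoulli} off the dynamic condition \eqref{E:bernoulli-weak} through the Hilbert-transform boundary correspondence \eqref{E:R}.

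First I would record that, since $-\phi$ is a harmonic conjugate of $\psi$, the function $W:=\phi+i\psi$ is holomorphic in $\Omega$ with $W'=\psi_Y+i\psi_X$, so that $|W'|^2=|\nabla\psi|^2$. I would then show that $W$ maps $\overline\Omega$ conformally and bijectively onto $\overline\P$: by \eqref{E:kinematics-surface} it carries $\Gamma$ into $\R=\partial\P$; by \eqref{E:psi-infty} one has $\psi<0$ in $\Omega$ with $\psi(X,Y)-Y\to0$, so $W$ maps $\Omega$ into $\P$, fixes $\infty$, and satisfies $W(Z)-Z\to0$; and regularity, $|\nabla\psi|>0$ on $\Gamma$, gives $W'\neq0$ up to the boundary and hence local univalence there. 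Global injectivity and surjectivity onto $\P$ then follow from an argument-principle (degree) count together with the normalization at infinity. This last point is the step I expect to be the main obstacle, since $\Omega$ is unbounded and the boundary behavior of $W$ must be controlled globally rather than only locally.

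Next, writing $Z:=W^{-1}=X+iY:\overline\P\to\overline\Omega$, the boundary values $Z(x)=X(x)+iw(x)$, $x\in\R$, reparametrize $\Gamma$, with $w=\mathrm{Im}\,Z(\cdot+i0)$ as in the statement. The normalization $Z(\zeta)-\zeta\to0$ identifies $\mathrm{Im}(Z-\zeta)$ as the bounded harmonic function in $\P$ with boundary datum $w\in H^1(\R)\subset L^2(\R)$ vanishing at infinity, i.e. the Poisson integral of $w$; differentiating, $Z'-1$ is holomorphic in $\P$, tends to $0$ at infinity, and has boundary value whose imaginary part is $w'\in L^2(\R)$, so $Z'-1\in\mathbf H^2(\P)$. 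The correspondence \eqref{E:R} then forces its boundary value to equal $\H w'+iw'$, whence
\[X'(x)=1+\H w'(x)\qquad(x\in\R).\]
Justifying this Hardy-space membership from the mild hypothesis $w\in H^1$ is the delicate analytic point of the step.

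Finally, from $|\nabla\psi|^2=|W'|^2$ and $Z'=1/W'$ I obtain $|\nabla\psi|^2=|Z'|^{-2}$, and on $\R$ one has $|Z'(x)|^2=X'(x)^2+w'(x)^2=(1+\H w')^2+(w')^2$. Substituting into \eqref{E:bernoulli-weak}, in which $v(x)=w$ along $\Gamma$, yields $\big((1+\H w')^2+(w')^2\big)^{-1}+2\mu w=1$, which rearranges to \eqref{E:bernoulli}. For \eqref{E:injective} I would use that $\Gamma$ is a nowhere-vertical graph: by \eqref{E:gamma-regular} its horizontal coordinate is a homeomorphism onto $\R$, and since $Z$ is orientation-preserving on $\partial\P$ with $\pm\infty\mapsto\pm\infty$, the map $x\mapsto X(x)$ is an increasing homeomorphism, so $1+\H w'=X'>0$ pointwise. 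The uniform bound $\inf_{x\in\R}(1+\H w'(x))>0$ then follows from continuity of $X'$, the surface of a regular solution being real analytic, together with $X'(x)\to1$ as $|x|\to\infty$, which the asymptotic flatness \eqref{E:gamma-infty} provides.
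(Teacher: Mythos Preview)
Your outline is essentially correct and, from the point where the inverse map $Z=(\phi+i\psi)^{-1}$ is in hand, tracks the paper closely: both of you use the Paley--Wiener/Poisson-integral characterization to identify the boundary trace of $Z'-1$ with $\H w'+iw'$, read off $X'=1+\H w'$, and substitute $|\nabla\psi|^2=|Z'|^{-2}$ into \eqref{E:bernoulli-weak} to get \eqref{E:bernoulli}; your argument for \eqref{E:injective} via monotonicity of $x\mapsto X(x)$, continuity, and $X'\to1$ at infinity is the same in spirit as the paper's.

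The genuine divergence is in the step you flag as the main obstacle, namely global bijectivity of $\phi+i\psi$ onto $\overline{\P}$. You propose an argument-principle/degree count, which is plausible but, as you note, requires care on an unbounded domain and you leave it undeveloped. The paper instead argues indirectly: it first uses the Hopf lemma and the maximum principle to show $\psi_Y>0$ throughout $\overline\Omega$ (so streamlines are graphs and $\phi_X>0$), then takes a Riemann map $F:\P\to\Omega$ with $F(\infty)=\infty$, extended to the boundary by Carath\'eodory, and invokes the Herglotz-type representation of the nonpositive harmonic function $\psi\circ F$ to force $\psi\circ F(x+iy)=\alpha y$; holomorphy then gives $(\phi+i\psi)\circ F(z)=\alpha z+\beta$, so $\phi+i\psi$ is an affine reparametrization of the Riemann map inverse. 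This approach buys you the bijection, the image $\overline\P$, and the behavior at infinity all at once, without any contour or degree argument. Your route would work if carried through, but the paper's is both cleaner and fully detailed; it also makes the normalization at infinity come for free, whereas your claim that $W(Z)-Z\to0$ assumes control of $\phi(X,Y)-X$ that is not directly contained in \eqref{E:psi-infty}.
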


\begin{proof}
Since $|\nabla \psi|>0$ on $\Gamma$ by hypothesis, a theorem of Lewy in \cite{Lewy} ensures that $\Gamma$ is a real analytic curve. By \eqref{E:gamma-regular}, moreover, $\Gamma$ is the graph of a function that is nowhere vertical. 

Note from \eqref{E:kinematics-surface}, \eqref{E:gamma-infty} and \eqref{E:psi-infty} that the harmonic function $\psi$ is negative in the smooth domain $\Omega$ and attains the maximum over $\overline{\Omega}$ at every point of $\Gamma$. By the Hopf boundary point lemma, therefore, $\phi_X=\psi_Y>0$ at $\Gamma$. Moreover $\psi_Y$ is harmonic in $\Omega$ and $\psi_Y(X,Y) \to 1$ as $|(X,Y)| \to \infty$ (see \eqref{E:psi-infty}). The maximum principle then implies that $\phi_X=\psi_Y>0$ throughout $\overline{\Omega}$. Consequently $\phi+i\psi$ makes a conformal bijection from $\overline{\Omega}$ onto $  \overline{\P}$. Furthermore a streamline $\{(X,Y) \in \Omega: \psi(X,Y)=-\psi_0\}$ for each $\psi_0 \in [0,\infty)$ is the graph of a smooth function on $\R$. 

\medskip

Let $F:\mathbb{P}\to \Omega$ be a conformal bijection such that $F(\infty)=\infty$. Then $F$ extends homeomorphically from $\overline{\P}$ onto $\overline{\Omega}$ by a famous theorem of Carath\'eodory (see \cite[pp. 89]{Gar}, for instance). Since $\psi \circ F$ is harmonic in $\mathbb{P}$, continuous on $\overline{\mathbb{P}}$ and nonpositive, moreover, \cite[Chapter I, Theorem 3.5]{Gar}, for instance, dictates that there exist a constant $\alpha\geq 0$ and a Borel measure $\sigma \geq 0$ on $\R$ such that 
\begin{equation}\label{E:sigma} 
\int^\infty_{-\infty} \frac{d\sigma(t)}{1+t^2}<\infty \quad \text{and}\quad
\psi\circ F(x+iy)=\alpha y+\frac{1}{\pi}\int^\infty_{-\infty} \frac{y}{(x-t)^2+y^2}~d\sigma(t)\end{equation}
for $x+iy\in\mathbb{P}$. We claim that $\sigma \equiv 0$. 

Let $v$ be a continuous function on $\R$ with bounded support, say $B \subset \mathbb{R}$. Clearly 
\[ \int^\infty_{-\infty} \left| \frac{v(x)y}{(x-t)^2+y^2}\right|dx \leq \frac{C}{1+t^2}\] 
for some constant $C>0$ uniformly for $y$ near $0$. Since $\psi \circ F(x+iy) \to 0$ as $y \to 0-$ uniformly for $x$ in any bounded subset of $\R$ (see \eqref{E:kinematics-surface}), moreover,
\[ \int^\infty_{-\infty} v(x) \psi \circ F(x+iy)~dx \to 0 \qquad \text{as $y \to 0-$}.\] 
In view of \eqref{E:sigma}, therefore, Fubini's theorem and the dominated convergence theorem yield that
\begin{align*}
0=&\lim_{y\to 0-} \int^\infty_{-\infty} v(x) \frac{1}{\pi}\int^\infty_{-\infty} \frac{y}{(x-t)^2+y^2}~d\sigma(t)dx \\
=&\lim_{y \to 0-} \frac{1}{\pi}\int^\infty_{-\infty} \int^\infty_{-\infty} \frac{v(x)y}{(x-t)^2+y^2}~dxd\sigma(t)
=\int^\infty_{-\infty} v(t)~d\sigma(t). \end{align*}
Since $v$ is arbitrary, this proves the claim. 

To recapitulate, $(\psi \circ F)(x+iy)=\alpha y$ for some $\alpha\geq 0$. Since $\psi \not\equiv 0$ and since $(\phi+i\psi)\circ F$ is holomorphic in $\P$, furthermore, $\alpha> 0$ and 
\[((\phi+i\psi) \circ F)(x+iy)=\alpha(x+iy) +\beta\quad\text{for some $\beta \in \R$.}\]  
To conclude, $F^{-1}=\alpha^{-1}(\phi+i\psi-\beta)$ and, in turn, $\phi+i\psi$ are conformal bijections from $\overline{\Omega}$ onto $\overline{\P}$ which map $\infty$ to $\infty$. Below let $Z=X+iY$ denote the complex coordinate in $\overline{\Omega}$ and we use $z=x+iy$ for $\overline{\P}$ such that $z=(\phi+i\psi)(Z)$. 

\medskip

It remains to show \eqref{E:bernoulli} and \eqref{E:injective}. Let 
\[\zeta:=(\phi+i\psi)^{-1}:\overline{\P} \to \overline{\Omega}.\] 
Then it is a conformal bijection such that $\zeta(\infty)=\infty$. Furthermore $0<|\zeta'|<\infty$ throughout $\overline{\P}$. Indeed, since $0<|\nabla \psi|<\infty$ throughout $\overline{\Omega}$ by the maximum principle and by hypothesis and \eqref{E:psi-infty},
\begin{equation}\label{E:zeta'}\zeta'(z)=(\phi+i\psi)' (Z)^{-1}=(\psi_Y-i\psi_X)(Z)^{-1}\end{equation}
is well-defined for each $z \in \overline{\P}$, where $Z=\zeta(z) \in \overline{\Omega}$. Consequently $\overline{\Omega}$ may be parametrized by $x+iy \in \overline{\P}$ and $\Gamma$ may be parametrized as $x \mapsto \zeta(x+i0)$. 

Let $w(x)=\text{Im}\,\zeta(x+i0)$, $x\in\R$. We claim that
\begin{equation}\label{E:zeta}
\zeta(z)=z+\mathcal{R}w(z)+\alpha\quad \text{for every $z \in \overline{\P}$}
\end{equation} for some $\alpha \in \R$, 
where $\mathcal{R}w$ is given in \eqref{E:R} and well-defined thanks to the regularity hypothesis. 

Note that $\text{Im}\,\zeta'$ is harmonic in $\P$ and continuous on $\overline{\P}$. Since it is bounded throughout $\overline{\P}$ (see \eqref{E:zeta} and \eqref{E:psi-infty}), moreover, $\text{Im}\, \zeta'(\cdot +i0)=w'$ by \cite[Chapter~I, Theorem~3.1]{Gar}, for instance; see Section~\ref{SS:preliminaries}. Since $w' \in L^2(\R)$ by hypothesis, on the other hand, the Paley-Wiener theorem (see \cite[Theorem~11.9]{Duren}, for instance) ensures that $\mathcal{R}w' \in \mathbf{H}^2({\P})$ exists such that $(\mathcal{R}w')^*=\H w'+iw'$ in $L^2(\R)$; recall from \eqref{E:f*} that $(\mathcal{R}w')^*$ is the boundary function of $\mathcal{R}w'$. In addition \cite[The\-o\-rem~11.2]{Duren}, for instance, dictates that $\mathcal{R}w'$ agrees with the Poisson integral of its boundary function; see Section~\ref{SS:preliminaries}. Therefore $\text{Im}\,\mathcal{R}w'$ is the Poisson integral of $w'$. Since $w'$ is smooth on $\R$ in light of a Lewy theorem in \cite{Lewy}, furthermore, the Fatou lemma (see \cite[Chapter~I, Theorem~5.3]{Gar}, for instance) asserts that $\text{Im}\,\mathcal{R}w' \to w'$ everywhere on $\R$, whence $\text{Im}\,\zeta'\equiv \text{Im}\,\mathcal{R}w'$ in $\overline{\P}$. The Cauchy-Riemann equations then yield that
\[ \zeta'=\mathcal{R}w'+\beta \quad \text{in $\overline{\P}$ for some $\beta\in\R$.}\] 
Since $\zeta'(z)\to 1$ by \eqref{E:psi-infty}, moreover, $\beta=1$. An integration therefore reveals \eqref{E:zeta} for some $\alpha \in \C$. But $\alpha \in \R$ since $\text{Im}\,\zeta(z)-y\to 0$ as $|z| \to \infty$ by \eqref{E:psi-infty}. This proves the claim. 

To summarize, \[ \Gamma=\{(x+\H w(x) +\alpha,w(x)):x \in \R\}\] 
and the former condition in \eqref{E:gamma-regular} is written as \eqref{E:injective}. Upon evaluating \eqref{E:zeta'} at $z$ on the real axis, correspondingly, \eqref{E:bernoulli-weak} becomes
\[ (1+\H w'(x))^2+w'(x)^2=|\zeta'(x)|^2=|\nabla \psi(Z(x+i0))|^{-2}=(1-2\mu w(x))^{-1}\] 
for every $x \in \R$. Incidentally $1-2\mu w>0$ everywhere on $\R$.\end{proof}

Notice that \eqref{E:bernoulli} recasts Beroulli's law of constant pressure at the fluid surface (see \eqref{E:bernoulli-weak}) while \eqref{E:injective} states that the surface is globally injective. 

\begin{lemma}\label{P:main<->bernoulli}
{\rm (a)} If $w \in H^1(\R)$ satisfies \eqref{E:deep} and if $w' \in L^\infty(\R)$ then $w$ satisfies \eqref{E:bernoulli}. 

{\rm (b)} If $w\in H^1(\R)$ satisfies \eqref{E:bernoulli} and \eqref{E:injective} and if $w'$, $\H w' \in C(\R)$ then $w$ satisfies \eqref{E:deep}.
\end{lemma}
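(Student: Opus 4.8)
The plan is to read both \eqref{E:deep} and \eqref{E:bernoulli} off the boundary behavior of one holomorphic function on $\P$. Put $\tau:=\H w'$, so that by \eqref{E:R} the boundary trace of $1+\mathcal{R}w'$ is $\zeta'^*:=(1+\tau)+iw'$, and introduce the complex function on $\R$
\[ F:=\overline{\zeta'^*}\,(1-2\mu w)=\bigl((1+\tau)-iw'\bigr)(1-2\mu w). \]
A direct computation identifies the \emph{Hardy relation} $\mathrm{Re}(F-1)=\H(\mathrm{Im}(F-1))$ with \eqref{E:deep}: one has $\mathrm{Re}(F-1)=\tau-2\mu w-2\mu w\tau$ and $\mathrm{Im}(F-1)=-w'+2\mu ww'$, so that $\H(\mathrm{Im}(F-1))=-\tau+2\mu\H(ww')$, and equating the two rearranges to $\tau=\mu\bigl(w+w\tau+\H(ww')\bigr)$. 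This equivalence, which uses only the linearity of $\H$ and $\H w'=\tau$, is the pivot for both parts.

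For part (a) I would start from \eqref{E:deep}, which by the pivot is exactly $\mathrm{Re}(F-1)=\H(\mathrm{Im}(F-1))$. Since $\mathrm{Im}(F-1)=-w'+2\mu ww'\in L^2(\R)$ (here $w\in H^1\subset L^\infty$ and $w'\in L^\infty$ keep the products bounded and square-integrable), this says precisely that $F-1$ is the boundary value of the Hardy function $\tilde h-1:=\mathcal{R}(\mathrm{Im}(F-1))\in\mathbf{H}^2(\P)$, with $\tilde h\to1$ at infinity. Then $\zeta'\tilde h$ is holomorphic on $\P$, and writing $\zeta'\tilde h-1=(\zeta'-1)+(\tilde h-1)+(\zeta'-1)(\tilde h-1)$ as a sum of $\mathbf{H}^2$- and $\mathbf{H}^1$-functions, it agrees with the Poisson integral of its boundary trace (Section~\ref{SS:preliminaries}). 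That trace is $\zeta'^*\tilde h^*=\zeta'^*\overline{\zeta'^*}(1-2\mu w)=|\zeta'^*|^2(1-2\mu w)$, which is real; hence $\mathrm{Im}(\zeta'\tilde h)\equiv0$ on $\P$, so $\zeta'\tilde h$ is a real constant, equal to $1$ by its limit at infinity. Thus $|\zeta'^*|^2(1-2\mu w)=1$, i.e. \eqref{E:bernoulli}.

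For part (b) I would run the pivot backwards. The hypothesis \eqref{E:injective} gives $\mathrm{Re}(\zeta'^*)=1+\tau\geq\delta>0$, and since $w',\H w'\in C(\R)$ this boundary datum is continuous; its Poisson extension is $\mathrm{Re}(\zeta')$, so $\mathrm{Re}(\zeta')\geq\delta$ throughout $\P$ and $|\zeta'|\geq\mathrm{Re}(\zeta')\geq\delta$. Hence $h:=1/\zeta'$ is a bounded holomorphic function on $\P$ with $h\to1$ at infinity. Now \eqref{E:bernoulli} reads $|\zeta'^*|^2=(1-2\mu w)^{-1}$, so the trace of $h$ is $h^*=\overline{\zeta'^*}/|\zeta'^*|^2=\overline{\zeta'^*}(1-2\mu w)=F$; since $F-1\in L^2(\R)$ and $h$ is bounded, $h-1\in\mathbf{H}^2(\P)$. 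Being a Hardy trace, $F$ automatically satisfies $\mathrm{Re}(F-1)=\H(\mathrm{Im}(F-1))$, which by the pivot is \eqref{E:deep}; the continuity of $w'$ and $\H w'$ promotes the almost-everywhere identity to a pointwise one.

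I expect the genuine difficulty to lie not in the algebra but in the two complex-analytic rigidity steps. In (a) it is the passage from ``$\zeta'\tilde h$ has real boundary values'' to ``$\zeta'\tilde h$ is constant,'' which requires the product to agree with the Poisson integral of its trace; the bookkeeping that $(\zeta'-1)(\tilde h-1)\in\mathbf{H}^1(\P)$ is what makes this legitimate on the line. In (b) it is the nonvanishing of $\zeta'$ on the \emph{interior} of $\P$, extracted from the positivity of $\mathrm{Re}\,\zeta'$ on the boundary via the Poisson representation. Both are supplied by the Fatou- and Poisson-type results recorded in Section~\ref{SS:preliminaries}, together with the boundedness secured by the standing hypotheses ($w'\in L^\infty$ in part (a); \eqref{E:injective} in part (b)).
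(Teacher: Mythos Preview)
Your argument is correct and is essentially the paper's proof in different notation: your $\zeta'$ is the paper's $W=1+\mathcal{R}w'$, your $\tilde h$ is the paper's $1-\mathcal{R}u'$ with $u=w-\mu w^2$, and your ``pivot'' identity is exactly the rewriting \eqref{E:gradJ}; part~(a) then reduces to the paper's computation that $\mathrm{Im}\bigl((1-\mathcal{R}u')(1+\mathcal{R}w')\bigr)$ has vanishing boundary trace, and part~(b) to the paper's identification of $1/W-1$ as an $\mathbf{H}^2$ function. The one place you streamline the paper is in~(b), where you obtain $1/\zeta'\in\mathbf{H}^\infty$ directly from $\mathrm{Re}\,\zeta'\geq\delta$ via the Poisson representation, whereas the paper invokes the outer-function characterization; your route is shorter and equally valid.
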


\begin{proof} 
(a) The proof is similar to that in the first part of \cite[Theorem~2.3]{BDT00a}. Hence we merely sketch the detail. 

Suppose that $w$ satisfies \eqref{E:deep} with the stated regularity. Let $u=w-\mu w^2$ and we rewrite \eqref{E:deep} as  
\begin{equation}\label{E:gradJ} 
(1-2\mu w)(1+\H w')+\H u'=1. \end{equation} 
Since $u' \in L^2(\R)$ by a Sobolev inequality, the Paley-Wiener theorem (see \cite[The\-o\-rem 11.9]{Duren}, for instance) ensures that $\mathcal{R}u' \in\mathbf{H}^2(\mathbb{P})$ exists such that $(\mathcal{R}u')^*=\H u'+iu'$ in $L^2(\R)$; recall from \eqref{E:f*} that $(\mathcal{R}u')^*$ is the non-tangential limit of $\mathcal{R}u'$. Correspondingly $\mathcal{R}w' \in \mathbf{H}^2(\mathbb{P})$ exists such that $(\mathcal{R}w')^*=\H w'+iw'$ in $L^2(\R)$. 

Let \[ V=\text{Im}\left(\left(1-\mathcal{R}u'\right) (1+\mathcal{R}w')\right)
=\text{Im}(-\mathcal{R}u'+\mathcal{R}w'-\mathcal{R}u'\mathcal{R}w').\] 
Then $V$ is harmonic in $\mathbb{P}$ and $V(\cdot+iy) \to V(\cdot+i0)$ in $L^2(\R)$ as $y \to 0-$. Indeed, since $(\H u'+iu')(\H w'+iw')=(\H u'\H w'-u'w')+i(u'\H w'+ w'\H u')$ and since $u'\H w'+ w'\H u' \in L^2(\R)$, the Cauchy integral formula furnishes that
\[\H(u'\H w'+w'\H u')=\H u'\H w'-u'w' \in L^2(\R)\] 
(alternatively an explicit calculation in the Fourier space reveals the identity);
accordingly $\mathcal{R}u'\mathcal{R}w'=\mathcal{R}(u'\H w'+w'\H u') \in \mathbf{H}^2(\mathbb{P})$ by \cite[Chapter~II, Corollary~4.2]{Gar}, for instance. Evaluating $V$ on the real axis, we use \eqref{E:gradJ} to obtain that
\begin{align*} 
V(\cdot+0i)=&\text{Im}\left( (1-\H u'-iu')(1+\H w'+iw')\right) \\
=&\text{Im}\left((1-2\mu w)((1+\H w')^2+(w')^2)\right)=0
\end{align*}
for almost all $x \in \R$. The Poisson integral technique then enforces that $V\equiv 0$ in $\mathbb{P}$.
In view of the Cauchy-Riemann equations, consequently, $\text{Re}((1-\mathcal{R}u') (1+\mathcal{R}w'))$
is constant in $\mathbb{P}$. Since $(\mathcal{R}u)^*(x), (\mathcal{R}w')^*(x) \to 0$ as $|x| \to \infty$, 
moreover, the constant must be $1$ and
\begin{align*} 
1=\text{Re}\left(\left(1-\mathcal{R}u\right) (1+\mathcal{R}w')\right)= (1-2\mu w)((1+\H w')^2+(w')^2)
\end{align*}
almost everywhere on $\R$. 

\medskip

(b) Suppose that $w\in H^1(\R)$ satisfies \eqref{E:bernoulli}, \eqref{E:injective} and enjoys the stated properties. Let $W=1+\mathcal{R}w'$. Then 
\begin{equation}\label{E:W*}
W^*=1+\H w'+iw'\end{equation} 
everywhere on $\R$. Since $0<(1+\H w')^2+(w')^2<\infty$ everywhere on $\R$ by hypothesis, $W \in C^1(\overline{\P}) \cap \mathbf{H}^\infty(\P)$. Moreover, since $\text{Re}\,W^*=1+\H w'$ is positive everywhere on $\R$ by hypothesis, the maximum principle implies that $W$ is nowhere zero on $\overline{\P}$. Since ${\displaystyle \int^\infty_{-\infty} \frac{|\log |W^*(x)||}{1+x^2}~dx<\infty}$, furthermore, \cite[Chapter~II, Theorem~4.6]{Gar}, for instance, asserts that $W$ is an outer function; see at the end of Section~\ref{SS:preliminaries}. Therefore $1/W \in \mathbf{H}^\infty(\mathbb{P})$.

Rewriting \eqref{E:bernoulli} as \[|W^*|^2=1-2\mu w,\] 
since $1/W^*-1=(1-2\mu w)\overline{W^*}-1 \in L^2(\R)$ by \eqref{E:W*} and by a Sobolev inequality, $1/W-1 \in \mathbf{H}^2(\mathbb{P})$ in light of \cite[Theorem~11.2]{Duren}, for instance; see Section~\ref{SS:preliminaries}. The Paley-Wiener theorem (see \cite[Theorem~11.9]{Duren}, for instance) then ensures that $1/W^*-1=\H u+iu$ for some $u \in L^2(\R)$. At last a straightforward calculation reveals \eqref{E:deep} since
\[ 1+\H u=(1-2\mu w)(1+\H w') \quad\text{and}\quad u=-(1-2\mu w) w'.\]
\end{proof}

\begin{corollary}\label{C:main<->bernoulli}
Suppose that $w \in H^1(\R)\cap C^{1+\alpha}(\R)$, $\alpha \in (0,1)$, satisfies \eqref{E:injective}. Then, $w$ satisfies \eqref{E:deep} if and only if it satisfies \eqref{E:bernoulli}.
\end{corollary}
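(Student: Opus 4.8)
The plan is to deduce the corollary directly from the two halves of Lemma~\ref{P:main<->bernoulli}, whose hypotheses differ from those of the corollary only in their regularity requirements. Since $w \in C^{1+\alpha}(\R)$ by assumption, I would first record that the definition \eqref{D:wHolder} of the weighted H\"older norm (with weight $\rho=0$) guarantees that $w$ and $w'$ are bounded on $\R$ and that $w'$ is locally $\alpha$-H\"older. Combining the local H\"older seminorm with the bound $\|w'\|_{L^\infty(\R)}<\infty$ over the range $|x-t|>1$ upgrades this to a global estimate, so that in fact $w' \in C^\alpha(\R)$.

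For the forward implication, suppose $w$ satisfies \eqref{E:deep}. The preceding observation furnishes $w' \in L^\infty(\R)$, which together with $w \in H^1(\R)$ is precisely the hypothesis of Lemma~\ref{P:main<->bernoulli}(a); that part then yields \eqref{E:bernoulli}.

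For the reverse implication, suppose $w$ satisfies \eqref{E:bernoulli}. By hypothesis $w$ also satisfies \eqref{E:injective}, and $w' \in C(\R)$ since $w \in C^{1+\alpha}(\R)$. It remains only to check that $\H w' \in C(\R)$: because $w' \in C^\alpha(\R)$ by the first step, the Privalov theorem (recalled in Section~\ref{SS:preliminaries}) gives $\H w' \in C^\alpha(\R) \subset C(\R)$. All the hypotheses of Lemma~\ref{P:main<->bernoulli}(b) are thus met, and that part delivers \eqref{E:deep}.

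I do not anticipate any genuine obstacle here: the corollary is a bookkeeping consequence of the lemma, the only mild care being the globalization of the H\"older seminorm noted above and the appeal to Privalov to transport continuity through the Hilbert transform.
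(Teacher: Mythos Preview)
Your proposal is correct and follows essentially the same route as the paper: both arguments reduce the corollary to the two parts of Lemma~\ref{P:main<->bernoulli} by noting that $w\in C^{1+\alpha}(\R)$ gives $w'\in C^\alpha(\R)$ (hence $w'\in L^\infty(\R)$) and then invoking Privalov's theorem to obtain $\H w'\in C^\alpha(\R)\subset C(\R)$. The paper's proof is terser and additionally remarks that \eqref{E:bernoulli} forces $0<(1+\H w')^2+(w')^2<\infty$, but your more explicit bookkeeping---in particular the globalization of the H\"older seminorm---is a welcome clarification rather than a departure.
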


\begin{proof}Since $w', \H w' \in C^\alpha(\R)$ by Privalov's theorem for the Hilbert transform (see Section~\ref{SS:preliminaries}), \eqref{E:bernoulli} ensures that $0<(1+\H w')^2+(w')^2<\infty$  everywhere on $\R$.\end{proof}

We state the main result on equivalence.

\begin{proposition}\label{T:equivalence}
Suppose that a curve $\Gamma$ and a function $\psi$ defined in the plane below $\Gamma$ form a regular solution of \eqref{E:stream} for some $\mu \in \R$. If $w \in H^1(\R) \cap C^{1+\alpha}(\R)$, $\alpha \in (0,1)$, where $w$ is in Proposition~\ref{P:stream->bernoulli}, then $w$ satisfies \eqref{E:deep} in addition to the conclusions of Lemma~\ref{P:stream->bernoulli}. 

Conversely, suppose that $w\in H^1(\R) \cap C^{1+\alpha}(\R)$, $\alpha \in (0,1)$, satisfies \eqref{E:deep} 
for some $\mu \in \R$ as well as \eqref{E:injective}. Let $\Gamma=\{(x+\H w(x), w(x)): x\in \R\}$ and let $\Omega$ be the open domain below $\Gamma$. Then, there exists a conformal bijection $\Phi: \Omega \to \mathbb{P}$ such that $\Gamma$, $\emph{Im}\,\Phi$ and $\mu$ form a regular solution of \eqref{E:stream}.
\end{proposition}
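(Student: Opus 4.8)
The plan is to prove the two implications separately, chaining the preceding results for the forward direction and reserving the genuinely new work for the converse. For the forward implication essentially nothing beyond bookkeeping is required: since $w\in H^1(\R)$ and arises from a regular solution as in Proposition~\ref{P:stream->bernoulli}, that result already yields Bernoulli's relation \eqref{E:bernoulli} and the injectivity condition \eqref{E:injective}; and because in addition $w\in C^{1+\alpha}(\R)$, Corollary~\ref{C:main<->bernoulli} applies verbatim and upgrades \eqref{E:bernoulli} to \eqref{E:deep}. Thus it suffices to invoke Proposition~\ref{P:stream->bernoulli} followed by Corollary~\ref{C:main<->bernoulli}.

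The converse carries the real content, and I would organize it around an explicit candidate for the inverse conformal map. First, Corollary~\ref{C:main<->bernoulli} converts the hypotheses \eqref{E:deep} and \eqref{E:injective} into \eqref{E:bernoulli}; in particular $1-2\mu w>0$ everywhere. I then set
\[\zeta(z)=z+\mathcal{R}w(z)\qquad(z\in\overline{\P}),\]
which is well defined since $w\in L^2(\R)$, holomorphic in $\P$, and continuous up to $\R$ with boundary values $\zeta(x+i0)=(x+\H w(x))+iw(x)$, so that $\zeta$ parametrizes $\Gamma$ along the real axis. Differentiating gives $\zeta'=1+\mathcal{R}w'=W$ in the notation of Proposition~\ref{P:main<->bernoulli}(b), whose proof already shows that $W$ is an outer function, nowhere zero on $\overline{\P}$; hence $\zeta$ is locally conformal. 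Condition \eqref{E:injective} makes $u(x)=x+\H w(x)$ strictly increasing, while the decay of $w$ and $\H w$ at infinity (both lie in $H^1(\R)$, since $\H$ is bounded on $L^2$ and commutes with $d/dx$) makes $u$ surjective and forces $\mathcal{R}w(z)\to0$ as $|z|\to\infty$. Consequently $\Gamma$ is the graph of a continuous function, $\zeta$ maps $\R\cup\{\infty\}$ homeomorphically onto the Jordan curve $\Gamma\cup\{\infty\}$, and $\zeta(\infty)=\infty$.

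The main obstacle is passing from this boundary correspondence to global bijectivity. Here I would appeal to the Darboux--Picard boundary-correspondence theorem (equivalently an argument-principle/winding-number count) to conclude that $\zeta$ maps $\P$ conformally and bijectively onto the interior component $\Omega$ below $\Gamma$; the delicate points are the treatment of the point at infinity and the verification that the image is $\Omega$ rather than its complement. Granting this, I set $\Phi=\zeta^{-1}:\Omega\to\P$ and $\psi=\mathrm{Im}\,\Phi$. Then $\psi$ is harmonic in $\Omega$ and $C^1$ up to $\Gamma$ because $\zeta'=W$ is continuous and nonvanishing on $\overline{\P}$, giving \eqref{E:laplace}; the identity $\psi=0$ on $\Gamma=\zeta(\R)$ gives \eqref{E:kinematics-surface}; and $\zeta(z)=z+o(1)$ forces $\Phi(Z)=Z+o(1)$, hence \eqref{E:psi-infty}. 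For \eqref{E:bernoulli-weak} I would use $|\nabla\psi|=|\Phi'|=|\zeta'|^{-1}$, so on $\Gamma$ one has $|\nabla\psi|^2=|W^*|^{-2}=1-2\mu w$ by \eqref{E:bernoulli}, whence $|\nabla\psi|^2+2\mu v=1$; the same computation gives $|\nabla\psi|^2=1-2\mu w>0$, so the solution is regular. Finally the conditions \eqref{E:gamma-injective}--\eqref{E:gamma-infty} on $\Gamma$ read off directly from $w\in C^{1+\alpha}(\R)\cap H^1(\R)$, the positivity \eqref{E:injective}, and the decay of $w$ and $\H w$.
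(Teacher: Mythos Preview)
Your argument is correct and follows the same route as the paper. The forward direction is verbatim what the paper does: invoke Lemma~\ref{P:stream->bernoulli} and then Corollary~\ref{C:main<->bernoulli}. For the converse the paper simply writes that the proof is ``nearly identical to that in the second part of \cite[Theorem~2.3]{BDT00a}'' and omits all details; your sketch---define $\zeta(z)=z+\mathcal{R}w(z)$, use the outer-function property of $W=\zeta'$ from Lemma~\ref{P:main<->bernoulli}(b) to get local conformality, use \eqref{E:injective} and a boundary-correspondence/argument-principle step to get global bijectivity onto $\Omega$, then set $\Phi=\zeta^{-1}$ and read off \eqref{E:laplace}--\eqref{E:bernoulli-weak} from \eqref{E:bernoulli}---is precisely the construction in that reference, so there is no substantive difference in approach. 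You even flag the one genuinely delicate point (handling $\infty$ in the Darboux--Picard step and checking that the image is $\Omega$ rather than its complement), which is indeed where the work lies.
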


\begin{proof}
If $\Gamma$ and $\psi$, $\mu$ form a regular solution of \eqref{E:stream} with the stated properties then the former assertion follows from Lemma~\ref{P:stream->bernoulli} and  Corollary~\ref{C:main<->bernoulli}. 

The proof of the converse is nearly identical to that in the second part of \cite[Theorem 2.3]{BDT00a}. Hence we omit the detail. \end{proof}

\section{Non-existence}\label{S:proof}
Attention is turned to the proof of Theorem \ref{T:non-existence}. 

\begin{lemma}\label{L:non-existence}
If $w \in H^1(\R)$ satisfies \eqref{E:deep} for some $\mu \in\R$ and if $xw' \in L^2(\R)$ then $w\equiv 0$. \end{lemma}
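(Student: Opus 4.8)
The plan is to exploit the scaling symmetry mentioned in the introduction, namely that \eqref{E:deep} is invariant under $w(x)\mapsto\lambda^{-1}w(\lambda x)$ together with $\mu\mapsto\lambda\mu$, and to derive a Pohozaev-type identity by differentiating this family at $\lambda=1$. The infinitesimal generator of the scaling is the vector field $xd/dx$ applied to $w$, which produces $xw'$; the hypothesis $xw'\in L^2(\R)$ is exactly what guarantees that this generator lands in the function space where the Hilbert transform and the $L^2$ pairing are well-behaved. The key algebraic fact I would use is that $xd/dx$ commutes with $\H$ in the appropriate sense (the commutation relation \eqref{E:Hcomm} alluded to in the introduction), so that applying the generator to the equation produces no anomalous boundary terms from the nonlocal operator.

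The concrete steps I would carry out are as follows. First, I would test \eqref{E:deep} against a suitable multiplier — the natural candidate is $xw'$ itself, or equivalently pair the equation with the generator of the scaling. Rewriting \eqref{E:deep} as $\H w'=\mu(w+w\H w'+\H(ww'))$ and taking the $L^2$ inner product of both sides with $xw'$, I would integrate and repeatedly use the skew-adjointness of $\H$ (its adjoint is $-\H$, as recorded at the end of Section~\ref{SS:preliminaries}) to move the Hilbert transforms around. Second, I would compute the contribution of each term under this pairing: the left side $\langle \H w', xw'\rangle$ and the three nonlinear terms on the right. The decay afforded by $w\in H^1(\R)$ and $xw'\in L^2(\R)$ ensures all integrations by parts are justified with vanishing boundary contributions. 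Third, I expect the algebra to collapse — because the equation is scale-invariant and the multiplier generates that scaling — into an identity of the form $(\text{nonzero constant})\cdot\mu\|w\|_{?}^{?}=0$ or, more precisely, a relation forcing a homogeneous quantity to vanish, from which $w\equiv 0$ follows directly.

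More explicitly, the mechanism is the following dilation identity. For any $\lambda>0$ set $w_\lambda(x)=\lambda^{-1}w(\lambda x)$; then $w_\lambda$ solves \eqref{E:deep} with $\mu$ replaced by $\lambda\mu$. Differentiating the functional identity expressing that $w_\lambda$ is a solution, evaluated at $\lambda=1$, gives a linear relation among integrals of $w$, $\H w'$, and their products weighted appropriately. Because every term in \eqref{E:deep} is homogeneous of a definite degree under the scaling, the derivative in $\lambda$ picks out the homogeneity degrees as numerical coefficients, and the only surviving term — the one not annihilated by integration by parts together with the commutation of $\H$ with $xd/dx$ — is proportional to $\mu$ times a positive-definite integral of $w$. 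Setting this to zero forces $w\equiv 0$ when $\mu\neq 0$, and a separate but easier direct argument handles $\mu=0$, where \eqref{E:deep} reduces to $\H w'=0$, whence $w'=0$ and $w\in H^1(\R)$ gives $w\equiv 0$.

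The main obstacle I anticipate is justifying that all the boundary terms genuinely vanish and that the formal integrations by parts are rigorous at the level of regularity assumed: the multiplier $xw'$ is only controlled in $L^2$, so pairings such as $\langle \H(ww'), xw'\rangle$ require care to show the integrand is genuinely integrable and that one may legitimately transfer $\H$ across the pairing and integrate by parts without a surface term at infinity. The commutation identity between $xd/dx$ and $\H$ is the linchpin that removes what would otherwise be a troublesome contribution from the nonlocal term; verifying it holds on the relevant class (and does not leak a boundary term) is the delicate point. Once that identity is in hand, the collapse to $\mu\|w\|^2=0$ should be essentially forced by the scaling homogeneity, making the conclusion $w\equiv 0$ immediate. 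This is precisely the "extremely simple non-existence proof" the introduction promises, and the weight hypothesis $xw'\in L^2(\R)$ is the exact price paid to make the Pohozaev pairing legitimate.
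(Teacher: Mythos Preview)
Your proposal is correct and follows essentially the same route as the paper: test \eqref{E:deep} against $xw'$, use the skew-adjointness of $\H$ together with the commutator identity $[\H,x]w'=0$ (which is \eqref{E:Hcomm}), and observe that the left side $\int xw'\H w'\,dx$ vanishes while the right side collapses to $-\tfrac{\mu}{2}\int w^2\,dx$, forcing $w\equiv0$. Your scaling-symmetry framing is a helpful motivation for why the multiplier $xw'$ is the right one, and your separate treatment of $\mu=0$ (where $\H w'=0$ gives $w'=0$ directly) fills in a point the paper leaves implicit.
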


\begin{proof}
Multiplying \eqref{E:deep} by $xw'$ and integrating over $\R$ we use integration by parts and that the adjoint of $\H$ is $-\H$ to obtain that
\begin{align}
\int^\infty_{-\infty} xw' \H w'~dx =&\mu\int^\infty_{-\infty} xww'~dx 
+ \mu\int^\infty_{-\infty}  xww'\H w'~dx +\mu\int^\infty_{-\infty} xw' \H(ww')~dx  \notag \\
=&-\frac{\mu}{2}\int^\infty_{-\infty} w^2~dx
-\frac{\mu}{2}\int^\infty_{-\infty} (w^2)'[\H,x]w'~dx, \label{E:pohozaev}
\end{align}
where $[\,,]$ means the commutator. Note that all integrals make sense by hypotheses. In particular $w\H w', \H(ww') \in L^2(\R)$.  

A straightforward calculation (see \eqref{D:H}) on the other hand reveals that
\begin{equation}\label{E:Hcomm} 
[\H, xd/dx]w(x)=[\H,x]w'(x)=\frac{1}{\pi} \int^\infty_{-\infty} w'(x)~dx\equiv 0.\end{equation}
That is to say, $xd/dx$ commutes with the Hilbert transform. Accordingly the second term on the right side of \eqref{E:pohozaev} vanishes. Since
\[ \int^\infty_{-\infty} x \, w'\H w' dx 
=-\int^\infty_{-\infty} x(\H w') w' dx -\int^\infty_{-\infty} ([\H,x]w')w'~dx\equiv 0,\]
moreover, \eqref{E:pohozaev} reduces to that ${\displaystyle \int^\infty_{-\infty} w^2~dx =0}$. \end{proof}

The proof is reminiscent of Pohozaev identities techniques. For differential equations (see \cite{dBS}, for instance) one may further combine it with a truncation argument to promote the conclusion to the $L^\infty_{loc}$-space setting; to compare, Lemma~\ref{L:non-existence} is in the $L^2$-space setting with weight. But the Hilbert transform unfortunately does not commute with functions as a rule. 

We instead make an effort to understand the asymptotic behavior of the solution. Note that if $xw'\in L^2(\R)$ and if $w$ vanishes algebraically at infinity then, necessarily, $w'(x) \to 0$ faster than $|x|^{-3/2}$ as $|x| \to \infty$.

\begin{lemma}\label{L:decay-}
If $w\in H^1(\R)\cap C^{1+\alpha}(\R)$, $\alpha\in (0,1)$, satisfies \eqref{E:deep} for some $\mu<0$ as well as \eqref{E:injective} then $w'(x)\leq C(1+x^2)^{-1}$ for every $x \in \R$ for some constant $C>0$. 
\end{lemma}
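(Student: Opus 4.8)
The plan is to exhibit $\mu<0$ as a discrete eigenvalue, lying strictly below the essential spectrum, of a relativistic Schr\"odinger operator $\H\,d/dx+m$, and to transfer the decay of its eigenfunction from that of the resolvent kernel. Since \eqref{E:deep} is translation invariant, differentiating it shows that $w'$ lies in the kernel of the linearized operator
\[ N'(w)h=\H h'-\mu h-\mu\big(h\,\H w'+w\,\H h'\big)-\mu\big(\H(w'h)+\H(wh')\big), \]
whose first-order part has principal symbol $(1-2\mu w)|\xi|$; here $1-2\mu w=|W^*|^{-2}>0$ in the notation $W=1+\mathcal{R}w'$ of Lemma~\ref{P:main<->bernoulli}. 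I would first perform a Plotnikov-type substitution, conjugating $N'(w)$ by a nonvanishing factor built from the conformal derivative $W$ so as to normalize this principal part to $\H\,d/dx=(-d^2/dx^2)^{1/2}$ and to reduce the entire lower-order part to multiplication by a function. The outcome is that a nonvanishing bounded multiple $\phi$ of $w'$ satisfies $(\H\,d/dx+m)\phi=\mu\phi$, where $m\in C^\alpha(\R)$ is a smooth function of $w,w',\H w'$ vanishing where these do, so $m(x)\to0$ as $|x|\to\infty$; moreover $\phi\in L^2(\R)\cap L^\infty(\R)$ with $\phi(x)\to0$, since $w\in H^1(\R)\cap C^{1+\alpha}(\R)$ forces $w,w'\to0$.

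Because $\mu<0$ lies strictly below the spectrum $[0,\infty)$ of $\H\,d/dx$, the resolvent $(\H\,d/dx-\mu)^{-1}$ is convolution against the kernel
\[ G(x)=\frac{1}{\pi}\int_0^\infty e^{\mu t}\,\frac{t}{x^2+t^2}\,dt, \]
obtained by subordinating the Poisson semigroup $e^{-t\,\H\,d/dx}$. This $G$ is positive, lies in $L^1(\R)\cap L^2(\R)$ with $\|G\|_{L^1}=|\mu|^{-1}$, and satisfies $G(x)=O((1+x^2)^{-1})$ with $G(x)\sim(\pi\mu^2x^2)^{-1}$ as $|x|\to\infty$, as one sees by expanding the Poisson kernel for large $x$. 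The eigenvalue equation then becomes the fixed-point identity $\phi=-G*(m\phi)$, which is the form I would iterate.

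To extract decay I would run a weighted bootstrap in the spaces $L^\infty_\gamma=\{\phi:\ \langle x\rangle^{\gamma}\phi\in L^\infty(\R)\}$. Splitting $m=m\mathbf{1}_{|x|\le R}+m\mathbf{1}_{|x|>R}$ and choosing $R$ so large that $\sup_{|x|>R}|m|$ is arbitrarily small, the map $\phi\mapsto G*(m\mathbf{1}_{|x|>R}\phi)$ is a contraction on $L^\infty_\gamma$ for each fixed $\gamma\in[0,2)$; this uses the weighted convolution bound $\langle x\rangle^{\gamma}\int G(x-y)\langle y\rangle^{-\gamma}\,dy\le C_\gamma<\infty$, which holds precisely because $\gamma<2$ and $G$ decays like $x^{-2}$ (the logarithmic singularity of $G$ at the origin being harmless since $G\in L^1$). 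As the compactly supported remainder $G*(m\mathbf{1}_{|x|\le R}\phi)$ is visibly $O((1+x^2)^{-1})$, solving the fixed-point identity places $\phi\in L^\infty_\gamma$ for every $\gamma<2$. To reach the endpoint I would feed this back: fixing $\gamma\in(3/2,2)$, the bound $w'=O(\langle x\rangle^{-\gamma})$ yields $w=O(\langle x\rangle^{-(\gamma-1)})$ upon integration and $\H w'=O(\langle x\rangle^{-\gamma})$ because $\int_\R w'=0$, whence $m=O(\langle x\rangle^{-(\gamma-1)})$ and $m\phi=O(\langle x\rangle^{-(2\gamma-1)})$ with $2\gamma-1>2$; since $m\phi\in L^1(\R)$, one last convolution estimate against $G(x)=O(x^{-2})$ gives $\phi=-G*(m\phi)=O((1+x^2)^{-1})$, and undoing the substitution yields $|w'(x)|\le C(1+x^2)^{-1}$.

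The main obstacle is the nonlocal bootstrap. Because $\H$ does not commute with cutoffs and the potential $m$ carries no a priori rate of decay, the decay cannot be improved by any single naive convolution estimate; the argument must exploit the strict spectral gap $\mu<\inf\big(\text{ess. spec. of }\H\,d/dx\big)=0$ together with the positivity and the sharp $x^{-2}$ tail of $G$, which are exactly the ingredients furnished by the eigenfunction-decay theory for relativistic Schr\"odinger operators in \cite{CaMaSi}. A secondary but nontrivial point is the rigorous derivation of the Plotnikov substitution, verifying that the conjugation removes every first-order term and leaves a genuine multiplication potential $m$ in $C^\alpha(\R)$.
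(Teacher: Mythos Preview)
Your proposal is correct and follows essentially the same route as the paper: linearize \eqref{E:deep}, apply the Plotnikov transformation to reduce the equation $\mathcal{L}w'=0$ to an eigenvalue problem $(\H\,d/dx+m)\phi=\mu\phi$ with $m(x)\to0$ and $\mu<0$ below the essential spectrum, and then read off the quadratic decay of the eigenfunction from the relativistic Schr\"odinger theory of \cite{CaMaSi}. The only difference is cosmetic: the paper simply invokes \cite[Proposition~4.1]{CaMaSi} for the conclusion $|\phi(x)|\le C(1+x^2)^{-1}$, whereas you spell out the resolvent-kernel bootstrap that underlies that proposition---and you yourself identify this as the content of \cite{CaMaSi}.
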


In light of Lemma~\ref{L:non-existence} it proves the second part of Theorem~\ref{T:non-existence}. 

Colloquially speaking, in case gravity acts oppositely to what is physically realistic, localized steady waves cannot arise at the surface of a two-dimensional infinitely-deep flow of water so long as the profile lacks self-intersections and cusps. (\eqref{E:injective} prevents the curve from developing self-intersections while $w \in C^{1+\alpha}(\R)$ excludes cusps.) In the Stokes wave setting, incidentally, a non-existence proof based upon duality is found in \cite{Tol02}. 

\begin{proof}
Since $1-2\mu w>0$ everywhere on $\R$ by Corollary~\ref{C:main<->bernoulli}, a bootstrapping argument and a Lewy theorem in \cite{Lewy} lead to that $w$ is real analytic. The proof is nearly identical to that of \cite[Theorem~3.7]{BDT00a} and hence we omit the detail. Accordingly \eqref{E:deep}, or equivalently \eqref{E:bernoulli} by Lemma \ref{P:main<->bernoulli} (a), is written as
\begin{equation}\label{E:bernoulli'} 
|W^*|^2(1-2\mu w)\equiv 1,\end{equation}
where $W=1+\mathcal{R}w' $; recall from \eqref{E:f*} and \eqref{E:R}, respectively, that $W^*$ is the non-tangential limit of $W$ and that $\mathcal{R}w'$ holomorphically extends to $\mathbb{P}$ the boundary function $\H w'+iw'$. Moreover $\text{Re}\,W^*=1+\H w'>0$ everywhere on $\R$ by hypothesis.

\medskip

Let $\mathcal{L}=\mathcal{L}(\mu,w):H^1(\R) \to L^2(\R)$ be the linearization of \eqref{E:deep} at the solution pair $\mu$ and $w$, defined as 
\begin{equation}\label{D:L}
\mathcal{L}v=\H v'-\mu(v+ w\H v'+ v\H w'+\H(vw)').\end{equation}
Clearly $\mathcal{L}w'=0$. Following \cite[Section~4]{Plo} in the finite-depth case or \cite[Section~5]{BDT00a} in the Stokes wave setting we introduce the Plotnikov transformation 
\begin{equation}\label{E:plotnikov}
Pv:=\text{Im}\,(W^*(\mathcal{R}v)^*)=(1+\H w')v+w'\H v.\end{equation}
A straightforward calculation then reveals that $P:H^1(\R) \to H^1(\R)$ is a ho\-meo\-mor\-phism with the inverse
\begin{equation}\label{E:A-1}v=\text{Im}\, \frac{(\mathcal{R}Pv)^*}{W^*}.
\end{equation}
Moreover (see \cite[Theorem~4.2]{Plo} and \cite[Theorem~5.1]{BDT00a}, for instance)
\begin{equation*}\label{E:linear} 
\int^\infty_{-\infty} \mathcal{L}PvPu~dx=\int^\infty_{-\infty} (\H v'-G(x)v)u~dx\end{equation*}
for any $u,v\in H^1(\R)$, where ${\displaystyle G=\text{Im}\,\frac{W^{*\prime}}{W^*}+\mu |W^*|^2(1+\H w')}$. Note that $G(x) \to \mu$ as $|x| \to \infty$ since $w(x),w'(x) \to 0$ and $W^*(x) \to 1$ as $|x| \to \infty$.

\medskip

In view of \eqref{E:A-1} and \eqref{E:bernoulli'} we revamp $\mathcal{L}w'=0$ as
\begin{equation}\label{E:linear'} 
\H v'+(\mu-G(x))v=\mu v,\qquad v=(1-2\mu w)w'\end{equation}
in the class of distributions. In other words, $v$ is an eigenfunction corresponding to the eigenvalue $\mu<0$ of  $\H d/dx+\mu-G(x)$. (The essential spectrum of $\H d/dx+\mu-G(x)$ lies in $(0,\infty)$.) Incidentally $\H d/dx=(-d^2/dx^2)^{1/2}$ is viewed as a relativistic Schr\"odinger operator in one dimension.
Since $G(x)$ is continuous on $\R$ ($G$ is real analytic) and since $\mu -G(x) \to 0$ as $|x| \to \infty$, a result in \cite[Proposition~4.1]{CaMaSi}, for instance, results in that there exists a constant $C>0$ such that $|v(x)| \leq C(1+x^2)^{-1}$ for every $x \in \mathbb{R}$ for some $C>0$. The proof then completes since $1-2\mu w$ is positive everywhere on $\R$ (see the latter in \eqref{E:linear'}).
\end{proof}


In case gravity acts downwards, on the other hand, $\mu>0$ is contained in the essential spectrum of $\H d/dx+\mu -G(x)$ and hence the preceding proof is not applicable. To attain decay nevertheless, we shall impose a decay condition that guarantees the invertibility of $\H d/dx-G(x)$ but which is milder than the quadratic one, and bootstrap the decay rate. 

\begin{lemma}\label{L:decay+}
If $w\in H^1(\R) \cap C^{1+\alpha}_{1+\epsilon}(\R)$ for some $\alpha \in (0,1)$ and for some $\epsilon \in(0,1]$ satisfies \eqref{E:deep} then $w \in C^{1+\alpha}_2(\R)$, where the weighted H\"older spaces are in \eqref{D:wHolder}. In particular $w'(x)\leq C(1+x^2)^{-1}$ for every $x \in \R$ for some $C>0$.
\end{lemma}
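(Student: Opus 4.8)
The plan is to reproduce the architecture of the proof of Lemma~\ref{L:decay-}, but to replace the spectral estimate of \cite{CaMaSi}---unavailable here because $\mu>0$ sits in the essential spectrum of $\H d/dx$---by a bootstrap that converts the hypothesized decay $w\in C^{1+\alpha}_{1+\epsilon}$ into the quadratic rate.

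As in Lemma~\ref{L:decay-} I would first bootstrap, via $1-2\mu w>0$ and Lewy's theorem, to the real analyticity of $w$, so that $W=1+\mathcal{R}w'$ and the coefficient $G$ of \eqref{E:linear'} are well defined and $v:=(1-2\mu w)w'$ solves the homogeneous equation $\H v'-\mu v=(G-\mu)v$, with $G-\mu\to0$ as $|x|\to\infty$ (the derivation of \eqref{E:linear'} uses only the equation and the transform, not the sign of $\mu$). Next I would make this quantitative: using \eqref{E:deep} to propagate the decay of $w,w'$ to $w''$ (apply $\H$ to read off $w'=-\mu\H w+\cdots$, then differentiate), so that $G-\mu$ and $v$ are both $O(\langle x\rangle^{-(1+\epsilon)})$ and the forcing $h:=(G-\mu)v$ is $O(\langle x\rangle^{-2(1+\epsilon)})$, integrable and decaying faster than $|x|^{-2}$. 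I would also record the two moment identities $\int v\,dx=0$ (since $v=(1-2\mu w)w'$ and $w\to0$) and $\int Gv\,dx=\int(\H v)'\,dx=0$, which together give $\int h\,dx=0$.

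The core is to invert $\H d/dx-\mu$ on the branch that decays. The symbol $|\xi|-\mu$ vanishes only at $\xi=\pm\mu$, so any two solutions of $(\H d/dx-\mu)v=h$ differ by $a\cos\mu x+b\sin\mu x$; the hypothesis that $w$ vanish faster than linearly forces $a=b=0$, which is precisely the sense in which the decay condition ``guarantees invertibility'' of $\H d/dx-G$. Since $v\in L^1$ makes $\hat v$ continuous, $(|\xi|-\mu)\hat v=\hat h$ yields the solvability relations $\hat h(\pm\mu)=0$ automatically, and the Green's function $K$ with $\widehat K=(|\xi|-\mu)^{-1}$ (principal value at $\pm\mu$) then represents $v=K*h$. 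The large-$|x|$ decay of $K$ comes not from the poles at $\pm\mu$---whose bounded oscillatory contributions are annihilated by $\hat h(\pm\mu)=0$---but from the corner of $|\xi|$ at the origin, giving $K(x)=O(\langle x\rangle^{-2})$; because $\int h\,dx=0$, the leading term of $v=K*h$ cancels and one gains an order, so $v=O(\langle x\rangle^{-3})$. Hence $w'=v/(1-2\mu w)=O(\langle x\rangle^{-3})$ and, on integrating, $w=O(\langle x\rangle^{-2})$, which together yield $w\in C^{1+\alpha}_2$ after transferring the pointwise estimates to the weighted Hölder norm \eqref{D:wHolder}; if $\epsilon$ is too small for $h$ to clear the threshold in a single pass, I would iterate, improving the exponent until it saturates at the value dictated by $K$.

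The main obstacle is the rigorous construction of this decaying resolvent across the resonant frequencies $\xi=\pm\mu$: one must verify that $\hat h$ is Lipschitz near $\pm\mu$ (which needs $xh\in L^1$, hence the quantitative decay of $h$) so that $\hat h(\xi)/(|\xi|-\mu)$ defines an $O(\langle x\rangle^{-2})$ kernel contribution, and that the oscillatory homogeneous modes are genuinely excluded by the decay hypothesis rather than merely discarded. A subsidiary but essential point, feeding the bootstrap, is the weighted decay of $w''$ and thus of $G$, which is not contained in $w\in C^{1+\alpha}_{1+\epsilon}$ alone and must be wrung out of the equation. With $w'=O(\langle x\rangle^{-2})$ in hand one has $xw'\in L^2(\R)$, so Lemma~\ref{L:non-existence} closes the $\mu>0$ case of Theorem~\ref{T:non-existence}.
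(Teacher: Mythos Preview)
Your proposal is sound in outline---the core idea of writing the equation as $(\H\,d/dx-\mu)u=\text{(quadratic)}$, using the assumed super-linear decay to exclude the oscillatory homogeneous modes $e^{\pm i\mu x}$, and then bootstrapping the decay rate---is exactly what the paper does. The route, however, differs. You follow the template of Lemma~\ref{L:decay-} through the Plotnikov transform, obtaining an equation for $v=(1-2\mu w)w'$ whose forcing $(G-\mu)v$ involves the Plotnikov coefficient $G$ and hence $W^{*\prime}$, i.e.\ $w''$; as you yourself recognize, you must first squeeze decay of $w''$ (and of $\H w''$) out of the equation before the quadratic estimate on the forcing becomes available. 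The paper bypasses Plotnikov entirely: it rewrites the Bernoulli form \eqref{E:bernoulli} directly as
\[
\H w'-\mu w=(\mu w-\tfrac12)\bigl((w')^2+(\H w')^2\bigr)+2\mu w\,\H w',
\]
a nonlinearity that is manifestly quadratic in $w$, $w'$, $\H w'$ alone, so that $\|G\|_{C^\alpha_{2(1+\epsilon)}}\lesssim\|w\|_{C^{1+\alpha}_{1+\epsilon}}^2$ is immediate from the hypothesis and Privalov, and no second derivative ever enters. For the linear step the paper quotes an explicit representation formula from \cite[Section~4, Case~II]{Sun97}---your Fourier-side discussion of the resonance at $\xi=\pm\mu$ and the automatic solvability conditions $\hat h(\pm\mu)=0$ make explicit what that reference contains---and reads off $C^{1+\alpha}_{1+\epsilon}\Rightarrow C^{1+\alpha}_{1+2\epsilon}$, iterating to $C^{1+\alpha}_2$. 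Your moment identity $\int h\,dx=0$ even buys a notch more, $w'=O(\langle x\rangle^{-3})$, than the lemma claims. In short: same skeleton, but the paper's choice of unknown ($w$ rather than $v$) keeps the nonlinearity first-order and spares the detour through $w''$.
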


In light of Lemma~\ref{L:non-existence} it proves the first part of Theorem~\ref{T:non-existence}.

\begin{proof}The proof closely resembles that in \cite[Section 4]{Sun97} and hence we merely sketch the detail.

If $w \in C^{1+\alpha}(\R)$ then $\H w'=\partial V/\partial y(\cdot,0) \in C^\alpha (\R)$, where $V$ is a (weak) solution of the boundary value problem
\[ \Delta V=0 \quad \text{in $\P$}, \qquad V=w \quad \text{on $y=0$}\]
and $V(x,y) \to 0$ as $|(x,y)| \to \infty$. Indeed,  
\[ \frac{\partial V}{\partial y}(x,y)=\frac{1}{\pi}\int^\infty_{-\infty} \frac{(x-t)w'(t)}{(x-t)^2+y^2}~dt
\qquad ((x,y)\in \overline{\P})\]by the Poisson integral formula while Privalov's theorem for the Hilbert transform (see Section~\ref{SS:preliminaries}) confirms the regularity. To interpret, $\H d/dx:C^{1+\alpha}(\R) \to C^\alpha(\R)$ is the Dirichlet-to-Neumann operator. Consequently one may associate \eqref{E:deep}, or equivalently \eqref{E:bernoulli} by Lemma \ref{P:main<->bernoulli} (a), with the boundary value problem
\begin{subequations}\label{E:BVP}
\begin{alignat}{2}
\Delta V=&\,0\qquad & &\text{in } \mathbb{P}, \label{E:BVP-laplace}\\
V=\,&w  \qquad & &\text{on }y=0,\label{E:BVP-dirichlet}\\
\frac{1}{V_x^2+(1+V_y)^2}&+2\mu V=1\qquad & &\text{on } y=0, \label{E:BVP-nonlinear}
\end{alignat} 
\end{subequations}
subject to that $V(x,y), \nabla V(x,y) \to 0$ as $|(x,y)| \to \infty$. A straightforward calculation moreover manifests that \eqref{E:BVP-nonlinear} is written as
\[ V_y-\mu V=(\mu V-1/2)(V_x^2+V_y^2)+2\mu VV_y=:G(V, \nabla V) \quad \text{on $y=0$}.\]
To recapitulate, \eqref{E:deep} is reformulated as 
\begin{equation}\label{E:BVP'} 
\H V'(\cdot,0)-\mu V(\cdot,0)=G(V(\cdot,0), \nabla V(\cdot,0)).\end{equation}

\medskip

We claim that if $V(\cdot,0) \in C^{1+\alpha}_{1+\epsilon}(\R)$ satisfies \eqref{E:BVP'} for some $\alpha \in (0,1)$ and for some $\epsilon \in (0,1]$ then $V(\cdot,0) \in C^{1+\alpha}_{1+2\epsilon}(\R)$. That is to say, if a solution of \eqref{E:BVP'} decays like $x^{-1-\epsilon}$ as $|x| \to \infty$ then it gains the additional decay of $x^{-\epsilon}$.

Since $G(V(\cdot,0), \nabla V(\cdot,0))$ behaves quadratically as $|V(\cdot,0)| \to 0$, 
\begin{equation}\label{E:G}
\|G(V(\cdot,0), \nabla V(\cdot,0))\|_{C^{\alpha}_{2(1+\epsilon)}(\R)} \leq 
C\|V(\cdot ,0)\|^2_{C^{1+\alpha}_{1+\epsilon}(\R)}\end{equation}
for some constant $C>0$, suggesting us to study the inhomogeneous  linear equation 
\begin{equation}\label{E:BVP''}
\H V'(\cdot,0)-\mu V(\cdot,0)=G(\cdot), \qquad G \in C^\alpha_{2(1+\epsilon)}(\R). \end{equation}
Upon observing that \eqref{E:BVP''} is the first equation in \cite[(4.19)]{Sun97} we then run the argument in \cite[Section~4, Case~II]{Sun97}, for instance, to show that $V(\cdot,0)\in C^{1+\alpha}_{1+2\epsilon}(\R)$. The idea is that the solution of \eqref{E:BVP''} is given by the formula \[ V(x,0)=
\frac{1}{\mu}\H \left( \int_x^\infty \sin\mu (x-t)G(t)~dt\right)'+\int_x^\infty\sin\mu(x-t)G(t)~dt,\] where 
the kernel associated with $\H d/dx$ decays like $x^{-2}$ as $|x|\to \infty$. This proves the claim. We refer the reader to \cite[Section~4, Case~II]{Sun97} for the detail. 

To summarize, if $w \in C^{1+\alpha}_{1+\epsilon}(\R)$ satisfies \eqref{E:deep} for some $\alpha \in (0,1)$ and for some $\epsilon \in (0,1]$ then $w=V(\cdot,0) \in C^{1+\alpha}_{1+2\epsilon}(\R)$, where $V$ solves \eqref{E:BVP}. We then run a bootstrapping argument in \cite{CrSt}, for instance, to conclude that $w=V(\cdot,0) \in C^{1+\alpha}_2(\R)$. 
\end{proof}

\subsection*{Acknowledgment} The author is supported by the National Science Foundation under grant No. DMS-1008885 and by the University of Illinois at Urbana-Champaign under the Campus Research Board grant No. 11162. She is grateful to anonymous referees for helpful comments.

\bibliographystyle{amsalpha}
\bibliography{steadyWW}

\end{document}